\newtheorem*{theorem}{Theorem}
\newtheorem*{proposition}{Proposition}
\newtheorem*{corollary}{Corollary}
\newtheorem*{conjecture}{Conjecture}
\theoremstyle{definition}
\newtheorem*{definition}{Definition}
\theoremstyle{remark}
\newtheorem*{remark}{Remark}
\newtheorem*{namedtheorem}{\theoremname}
\newcommand{\theoremname}{testing}
\def\Z{\mathbb{Z}}
\def\ref{\textup{ref}}
\definecolor{lblue}{HTML}{A1BAC2}
\definecolor{lgreen}{HTML}{AEBC7D}
\definecolor{lred}{HTML}{BB6A5E}
\numberwithin{equation}{section}
\author[Tye Lidman]{Tye Lidman}
\thanks{}
\address{Department of Mathematics, North Carolina State University}
\email{tlid@math.ncsu.edu}
\author[Lisa Piccirillo]{Lisa Piccirillo}
\thanks{}
\address{Department of Mathematics, The University of Texas at Austin}
\email{lisa.piccirillo@austin.utexas.edu}
\title{Stably irreducible non-orientable knotted surfaces}
\date{}
\begin{document}
\maketitle
\vspace{-12pt}
\begin{abstract}
We give an elementary obstruction to reducibility for knotted surfaces in the four-sphere.  As a new application, we construct stably irreducible non-orientable surfaces.  
\end{abstract}
\vspace{5pt}
In four-dimensional knot theory, one studies embeddings of closed (not necessarily orientable) surfaces into the four-sphere. As in the classical setting there are standard \textit{unknotted} embeddings\footnote{For every closed non-orientable surface $\Sigma$ and integer $e$ which can arise as the normal Euler number of an embedding of $\Sigma$ in $S^4$, there is an unknot $U_{\Sigma, e}$ with that surface type and normal Euler number.}
Some knotted surfaces decompose as a knotted surface connected summed with a (non-spherical) unknotted surface; surfaces with such a decomposition are called \textit{reducible}.  There are many methods in the literature for producing irreducible knotted surfaces, see for example \cite{BMS, Gordon, Litherland, Livingston, Maeda, Yoshikawa}. However, for $RP^2$, which is the simplest surface which is not tautologically irreducible, no irreducible examples have been found. 

\begin{conjecture}[Kinoshita conjecture, see \cite{KS} Remark 3.7]
Any embedded real projective plane in $S^4$ is isotopic to a connected sum of an unknotted projective plane and a knotted 2-sphere.
\end{conjecture}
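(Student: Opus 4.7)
My plan is to search for a splitting 2-sphere in the complement of an arbitrary embedded $RP^2$ in $S^4$. Given $F \subset S^4$ an embedded projective plane, I would attempt to construct an embedded 2-sphere $S \subset S^4$ meeting $F$ transversely in a single point such that cutting $(S^4, F)$ along $S$ decomposes $F$ as a connected sum of an unknotted $U_{RP^2, \pm 2}$ with a 2-knot $K$.

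I would begin by exploiting the rigidity forced by the normal Euler number: $e(F) \in \{-2, 2\}$, so there are only two unknotted models to target. The exterior $X = S^4 \setminus \nu(F)$ has $H_1(X; \Z) = \Z/2$ by Alexander duality, and $\partial X$ is a twisted circle bundle over $F$ with the appropriate Euler number. I would then pass to the connected double cover $\widetilde{X} \to X$ corresponding to the abelianization. For the unknotted $RP^2$, this cover is the exterior of an unknotted 2-sphere, with a standard deck involution on $S^4$ whose quotient recovers the original picture. The main technical goal would be to show that $\widetilde{X}$ is always equivariantly diffeomorphic to the exterior of some 2-knot $K \subset S^4$ with respect to an involution having the required fixed-point behavior, so that quotienting and re-gluing yields the desired connect sum decomposition.

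The principal obstacle I anticipate is the equivariant recognition step: the fundamental group of a 2-knot exterior can be essentially any finitely presented group of deficiency one, so $\widetilde{X}$ can be topologically wild, and producing an ambient involution on $S^4$ with prescribed fixed-point set compatible with this wildness seems to require the full force of the conjecture. A more tractable intermediate problem is the case $\pi_1(X) = \Z/2$, where one could try to apply Freedman's topological 4-manifold classification to identify $X$ directly with the standard unknotted $RP^2$ exterior, then upgrade to the smooth category. As a softer sanity check independent of this program, one might verify that the reducibility obstruction developed earlier in this paper necessarily vanishes for any embedded $F$, providing strong corroboration without quite proving the conjecture.
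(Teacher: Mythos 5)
There is a fundamental mismatch here: the statement you are trying to prove is the Kinoshita conjecture, which is an \emph{open problem}. The paper does not prove it and says so explicitly (``We cannot disprove the Kinoshita conjecture''); its contribution is an obstruction (the Proposition and Theorem) that could in principle be used to \emph{disprove} the conjecture if one found an embedded $RP^2$ whose branched double cover has $b_2(\pi_1) > 0$, together with irreducible examples for other surface types. So no blind proof of this statement should be expected to succeed, and yours does not: it is a research program, not an argument, and its central step is circular. Asserting that the irregular double cover $\widetilde{X}$ of the exterior is ``equivariantly diffeomorphic to the exterior of some 2-knot $K$'' with an involution on $S^4$ having the right fixed set is essentially a restatement of the conjecture itself --- if you had such an equivariant identification, the connected-sum decomposition would follow by quotienting, but nothing in your outline produces it, and you acknowledge as much. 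There is no a priori reason $\widetilde{X}$ is a 2-knot exterior (it is a smooth 4-manifold with $\Z/2$-homology-circle-like behavior only after the decomposition you seek), and even in the simplest case $\pi_1(X) = \Z/2$, identifying $X$ with the standard unknotted $RP^2$ exterior and then ``upgrading to the smooth category'' runs headlong into the failure of smooth 4-dimensional surgery/$h$-cobordism techniques; Freedman's machinery would at best give topological information, and smooth uniqueness is precisely the kind of statement nobody knows how to prove.

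Your proposed ``sanity check'' is also not soft: the paper's Theorem does not even apply to $RP^2$ (it requires $\chi(S) = 2-2k < 2$, while $\chi(RP^2) = 1$), and the relevant statement is the warm-up Proposition. Verifying that $b_2(\pi_1(\Sigma_2(F))) = 0$ for \emph{every} embedded projective plane $F$ is itself an open-ended question about which groups arise as fundamental groups of these branched covers; if it failed for some $F$, that $F$ would be a counterexample to Kinoshita. So the honest summary is: the statement is a conjecture, the paper offers no proof, and your proposal identifies the right objects (double covers, Euler number rigidity) but defers the entire difficulty to an unproved equivariant recognition step.
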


We cannot disprove the Kinoshita conjecture, but we contribute a new elementary method for proving irreducibility via studying the group homology of the fundamental group of the branched double cover. We were inspired by an observation of Kervaire \cite[p.1]{Kervaire}, but note that some previous proofs of irreducibiliy have also utilized group homology in other ways.  Our method in fact establishes the following stronger notion of irreducibility; a surface $S$ is called {\em stably irreducible} if there is no surface $S'$ with $\chi(S')  > \chi(S)$ and unknotted surfaces $U, U'$ such that $S\# U$ and $S' \# U'$ are equivalent. Stably irreducible implies irreducible, the casual reader can safely ignore instances of `stably'.  Livingston constructed examples of orientable stably irreducible surfaces of any genus \cite{Livingston}, but constructing non-orientable stably irreducible surfaces seems to remain open.  

\begin{theorem}\label{prop:obstruct}
If $S$ is a double of a ribbon surface in $B^4$ with $\chi(S)= 2-2k <2$ and $rk(H_2(\pi_1(\Sigma_2(S))))=k$, then $S$ is stably irreducible. 
\end{theorem}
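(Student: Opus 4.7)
The plan is to establish a universal upper bound on $\rk(H_2(\pi_1(\Sigma_2(-))))$ in terms of the Euler characteristic, and then to exploit the invariance of $\pi_1(\Sigma_2(-))$ under stabilization by unknotted surfaces.

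First I would verify that for every unknotted surface $U \subset S^4$, the branched double cover $\Sigma_2(U)$ is simply-connected: such covers are standard closed 4-manifolds, namely connect sums of $\pm\mathbb{C}P^2$, $S^2 \times S^2$, and $S^2 \widetilde{\times} S^2$. Since branched double covers commute with connect sum ($\Sigma_2(X \# U) \cong \Sigma_2(X) \# \Sigma_2(U)$), Seifert--van Kampen yields $\pi_1(\Sigma_2(X \# U)) \cong \pi_1(\Sigma_2(X))$ for every closed surface $X \subset S^4$ and every unknotted $U$. Hence $\rk(H_2(\pi_1(\Sigma_2(-))))$ is invariant under stabilization.

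With this in hand, the argument proceeds by contradiction. Suppose $S' \subset S^4$ and unknotted $U, U'$ satisfy $\chi(S') > \chi(S) = 2-2k$ and $S \# U \sim S' \# U'$. Then $\pi_1(\Sigma_2(S')) \cong \pi_1(\Sigma_2(S))$ and therefore $\rk(H_2(\pi_1(\Sigma_2(S')))) = k$. I would deduce the contradiction from a universal bound
\[ 2\,\rk(H_2(\pi_1(\Sigma_2(X)))) \le 2 - \chi(X), \]
valid for every closed surface $X \subset S^4$. Applied to $X = S'$, this gives $\chi(S') \le 2-2k = \chi(S)$, contradicting $\chi(S') > \chi(S)$.

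The main obstacle is establishing the universal bound. The naive application of Hopf's theorem (the surjection $H_2(\Sigma_2(X)) \twoheadrightarrow H_2(\pi_1(\Sigma_2(X)))$) combined with the Euler-characteristic formula $\chi(\Sigma_2(X)) = 4-\chi(X)$ gives only $\rk(H_2(\pi_1)) \le b_2(\Sigma_2(X))$, which is off by a factor of two. The Kervaire-inspired refinement alluded to in the introduction should supply this missing factor: the deck involution $\tau$ of $\Sigma_2(X) \to S^4$ acts by $-1$ on $H_i(\Sigma_2(X);\Q)$ for $i = 1, 2, 3$ (since the quotient $S^4$ has no rational homology in those degrees), and the Hopf map is $\tau$-equivariant. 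Combining this eigenspace constraint with Poincar\'e duality on $\Sigma_2(X)$ should force the Hopf image in $H_2(\Sigma_2(X);\Q)$ to pair non-trivially with a subspace of equal rank, effectively doubling the count in the bound. Organizing this carefully, and handling the boundary terms that arise when $b_1(\Sigma_2(X))$ is non-zero (which is where the ribbon hypothesis on $S$ enters, via $b_1(\Sigma_2(S)) = 0$), is the technical heart of the argument.
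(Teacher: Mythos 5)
Your first step (stabilization invariance: $\Sigma_2(X\#U)\cong\Sigma_2(X)\#\Sigma_2(U)$ with $\Sigma_2(U)$ simply-connected, hence $\pi_1(\Sigma_2(-))$ unchanged) is correct and is also how the paper begins. But the heart of your plan is the universal inequality $2\,\mathrm{rk}\,H_2(\pi_1(\Sigma_2(X)))\le 2-\chi(X)$ for \emph{every} closed surface $X\subset S^4$, and this is exactly what you do not prove; moreover the mechanism you sketch cannot produce it. The transfer argument does show that the deck involution $\tau$ acts as $-\mathrm{id}$ on $H_i(\Sigma_2(X);\Q)$ for $i=1,2,3$, but $-\mathrm{id}$ preserves every subspace and is an isometry of every form, so ``$\tau$-equivariance of the Hopf map'' imposes no constraint at all on the image of $\pi_2$ and yields no factor of $2$. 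What Hopf plus the Euler characteristic actually give is only $\mathrm{rk}\,H_2(\pi_1)\le b_2(\Sigma_2(X))=2-\chi(X)+2b_1(\Sigma_2(X))$, which is the bound you already dismissed as insufficient. Note also that your inequality must be applied to the unknown competitor surface $S'$, about which nothing is known (it need not be ribbon or a double), so the ribbon hypothesis --- which you invoke only to get $b_1(\Sigma_2(S))=0$ --- cannot rescue the step where it is needed. Indeed, if your universal bound were true, the theorem would hold with no ribbon hypothesis at all, which should be a warning sign: the paper states explicitly that the ribbon-double structure is the geometric input.

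The paper's proof avoids any statement about arbitrary surfaces. Writing $S\#U=S'\#U'$ (after arranging $U,U'$ to be sums of unknotted $RP^2$'s, with $\ell=b_2(\Sigma_2(U))<\ell'=b_2(\Sigma_2(U'))$), it studies a single manifold, $\Sigma_2(S\#U)=\Sigma_2(S'\#U')$, and the intersection form restricted to $\operatorname{im}\bigl(\pi_2\to H_2\bigr)$, whose rank is $k+\ell$ by the Hopf sequence and the hypothesis $\mathrm{rk}\,H_2(\pi_1(\Sigma_2(S)))=k$. The ribbon-double structure of $S$ is used to realize a rank-$k$ half of this image by pairwise disjoint embedded square-zero spheres in $\Sigma_2(S)$, while $\Sigma_2(U)$ contributes $\ell_+[+1]\oplus\ell_-[-1]$; hence the nondegenerate part of the restricted form has rank exactly $\ell$. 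From the $S'\#U'$ description, the $CP^1$'s in $\Sigma_2(U')$ give spherical classes with form $\ell'_+[+1]\oplus\ell'_-[-1]$, so the nondegenerate part has rank at least $\ell'>\ell$, a contradiction. This comparison of the restricted form across the two decompositions is the idea missing from your proposal; without it (or an actual proof of your universal bound, which I do not believe follows from the tools you list), the argument has a genuine gap.
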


Here $\Sigma_2(S)$ refers to the branched double cover. The ribbon hypothesis will be used to show that the branched double cover contains many homologically essential square zero spheres.
Using the Theorem, we can easily construct stably irreducible surfaces. 

\begin{corollary}\label{thm:generalized-kinoshita-conjecture}
For any $k> 0$ there exist closed connected surfaces $S$ in $S^4$ with $\chi(S)=2-2k$ which are stably irreducible. The surfaces $S$ can be taken either orientable or not. 
\end{corollary}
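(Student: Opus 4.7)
The plan is to reduce Corollary \ref{thm:generalized-kinoshita-conjecture} to Theorem \ref{prop:obstruct} by constructing, for each $k > 0$, a ribbon surface $F_k \subset B^4$ whose double $S_k \subset S^4$ satisfies $\chi(S_k) = 2-2k$ and $\rk H_2(\pi_1(\Sigma_2(S_k))) = k$. I would build $F_k = B_1 \natural \cdots \natural B_k$ as a boundary-connected sum of $k$ base ribbon surfaces $B_i$, each with $\chi(B_i) = 0$. Using $\chi(A \natural B) = \chi(A) + \chi(B) - 1$, this gives $\chi(F_k) = 1-k$, so $\chi(S_k) = 2\chi(F_k) = 2-2k$ (since $\partial F_k$ is a link of Euler characteristic zero). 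Taking each $B_i$ to be a ribbon annulus yields an orientable genus-$k$ surface $S_k$; replacing one or more $B_i$ by a ribbon M\"obius band produces a non-orientable $S_k$ with the same Euler characteristic.

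Branched double covers are natural under boundary-connected sum, so $\Sigma_2(B^4, F_k) = \Sigma_2(B^4, B_1) \natural \cdots \natural \Sigma_2(B^4, B_k)$. Doubling across the boundary yields $\Sigma_2(S^4, S_k) = \#_{i=1}^k \Sigma_2(S^4, D_i)$, where $D_i$ denotes the double of $B_i$ in $S^4$. By van Kampen, $\pi_1(\Sigma_2(S_k)) \cong \pi_1(\Sigma_2(D_1)) * \cdots * \pi_1(\Sigma_2(D_k))$, and the identification $K(G_1 * G_2, 1) \simeq K(G_1, 1) \vee K(G_2, 1)$ gives $H_2(\pi_1(\Sigma_2(S_k))) = \bigoplus_{i=1}^k H_2(\pi_1(\Sigma_2(D_i)))$. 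Hence achieving the rank-$k$ condition reduces to exhibiting, in each of the orientable and non-orientable settings, a single base block $B$ with $H_2(\pi_1(\Sigma_2(D(B)))) \cong \Z$.

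For the base case I would choose $B$ to be an explicit ribbon annulus (respectively, ribbon M\"obius band) whose boundary link in $S^3$ admits a ribbon presentation with a single $1$-handle, so that $\Sigma_2(B^4, B)$ can be read off directly from the handle diagram and its double has $\pi_1$ containing a $\Z^2$ subgroup responsible for a $\Z$ summand in $H_2(\pi_1)$. Natural candidate boundary links come from two-bridge ribbon links or suitably twisted unknots bounding non-orientable bands. The main technical obstacle is precisely this base case: one cannot simply take an unknotted annulus, since its double is an unknotted torus whose branched double cover is simply connected (so $H_2(\pi_1) = 0$, not $\Z$). The base $B$ must therefore be knotted enough that $\pi_1(\Sigma_2(D(B)))$ contributes a $\Z$ summand to $H_2$, yet tame enough for this group-homology computation to be tractable. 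Once this base case is verified, applying Theorem \ref{prop:obstruct} to $S_k$ completes the proof.
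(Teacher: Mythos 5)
Your reduction step is fine and in fact mirrors the paper's own strategy for higher genus: the paper also obtains $\chi = 2-2k$ by forming connected sums ($\#_\ell S$ of a single base example), using that branched double covers and $H_2(\pi_1)$ behave additively under connected sum. But the entire content of the Corollary lies in the base case that you explicitly leave open: producing even one ribbon annulus (resp.\ M\"obius band) in $B^4$ whose double $S$ has $b_2(\pi_1(\Sigma_2(S))) = 1$. Without that, nothing has been proved. Moreover, your suggested candidates are doubtful as stated: two-bridge ribbon links and mildly twisted unknots tend to give branched covers whose fundamental groups are small (cyclic, dihedral-type, or free products of such), and these have vanishing $H_2$; and the heuristic that a $\Z^2$ subgroup of $\pi_1$ "is responsible for a $\Z$ summand in $H_2(\pi_1)$" is not valid in general, since the induced map $H_2(\Z^2) \to H_2(\pi_1)$ can easily be zero. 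You also need the rank to come out \emph{exactly} $k$ when $\chi = 2-2k$, so in the non-orientable case you cannot get away with an unknotted M\"obius band block; you need a genuinely knotted non-orientable base block contributing a full $\Z$ to $H_2(\pi_1)$, which compounds the missing construction.

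For comparison, the paper solves the base case by \emph{choosing the group first}: it takes $G = T(2,3,7)$, which has $H_2(G) = \Z$ and a deficiency $-1$ presentation, builds a 4-manifold $X'$ from $0$-, $1$-, and $2$-handles realizing that presentation so that an involution acts on each handle by strong inversion, and doubles; the quotient is $S^4$ with branch set the double of a ribbon surface, and the 2-handle framings (even vs.\ odd intersection form on the double) decide whether the branch surface is a torus or a Klein bottle, giving both orientable and non-orientable examples at once. The paper also gives a second, more explicit base construction: band the ribbon disk for $P(-2,3,7)\,\#\,{-P(-2,3,7)}$ to pass to $P(-2,3,7,n)$ and double; the band lifts to a 2-handle attached along a regular fiber of the Seifert fibered space $\Sigma_2(P(-2,3,7))$, and killing a regular fiber yields the orbifold group $T(2,3,7)$, again with $H_2 = \Z$, with the parity of $n$ controlling orientability. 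Some construction of this kind, with a verified computation of $H_2(\pi_1(\Sigma_2))$, is what your proposal still needs.
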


Our examples have (normal) Euler number 0.  Maggie Miller pointed out it would be interesting to find normal Euler number $\pm 4$ Klein bottles which are stably irreducible.

This note is organized as follows. We first give a construction of the desired stably irreducible surfaces, assuming the Theorem.  Then we prove a weaker version of the Theorem to illustrate the ideas of the obstruction.  Then we prove the Theorem. Finally we give an additional simple construction of knotted surfaces which are stably irreducible. We work in the smooth category for convenience, but note that our methods also apply in the topological category. 

\begin{proof}[Proof of the Corollary]\label{con:examples}
To construct knotted surfaces which satisfy the hypotheses of the Theorem we will begin by choosing a suitable group $G$ which we will force to appear as $\pi_1(\Sigma_2(S))$. We begin with the case $\chi(S) = 0$. We choose a finitely-presented group $G$ with $b_2(G) = 1$ and a deficiency $-1$ presentation. There are many such groups, for example the triangle group
\[
T(2,3,7) = \langle x, y, z \mid x^2 = y^3 = z^7 = xyz = 1\rangle
\]
has $H_2(T(2,3,7)) = \mathbb{Z}$ \cite[p.576]{Triangle}.


\begin{figure}
\includegraphics[scale=.4,trim = 0in 2.6in 0in 2.3in, clip]{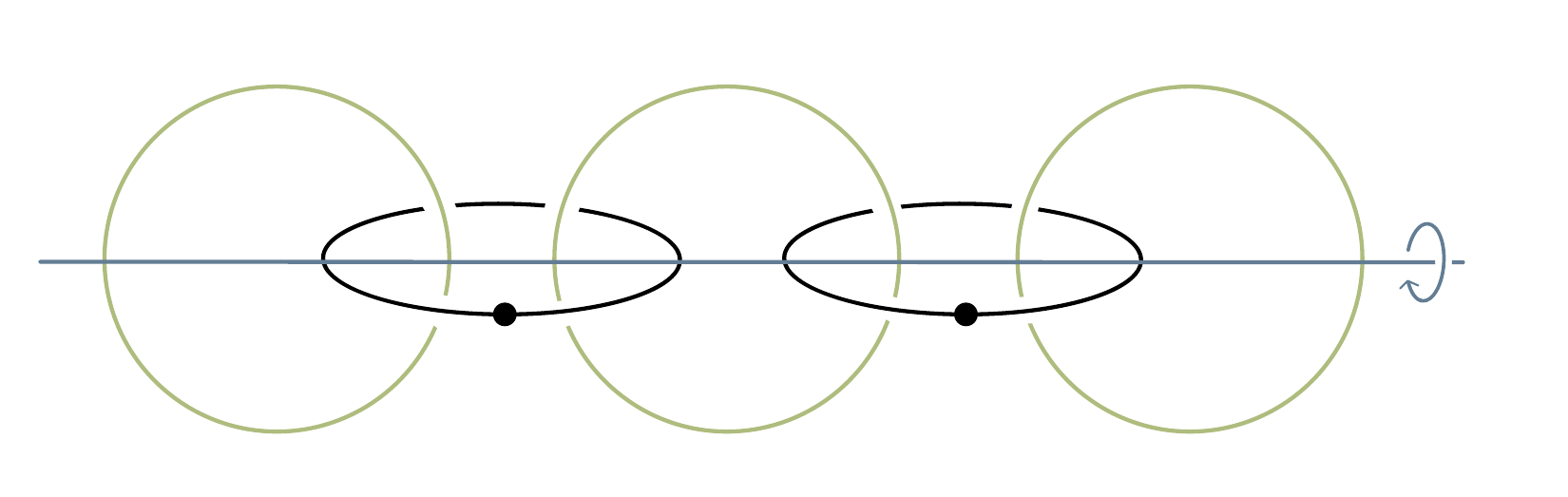}
\caption{A handle diagram for a 4-manifold $X'$ with $H_2(X') = \mathbb{Z}$ which admits an involution and has fundamental group $T(2,3,7)$ can be obtained by attaching 2-handles along cables of the green curves. In particular, from left to right, the $(7,2), (3,2)$, and $(2,1)$ cables work. Any choice of framings works. }\label{fig:smyhandle}. 
\end{figure}

To build $S$ we will build a closed four-manifold $X$ with $b_2(X) = 2$ and an involution $\varphi$ with $X/\varphi = S^4$; the branch set in $X/\varphi$ will be $S$. By construction $\Sigma_2(S) = X$.  To build X, we first build a 4-manifold $X'$ from 0,1, and 2-handles corresponding to a deficiency $-1$ presentation such that $\pi_1(X') = G$ and $H_2(X') = \mathbb{Z},$ and such that $X'$ has an involution $\varphi$ which acts on each handle by strong inversion.\footnote{We do not claim that it is always possible to build a handlebody for $G$ that admits such an involution, but it is often simple in practice.} See Figure \ref{fig:smyhandle} for an example with $G=T(2,3,7)$. The 2-handle framings do not affect the fundamental group or the involution, so we can choose them as we like; in particular we can make the intersection form of $X'$ either even or odd.  Since $\varphi$ is a strong inversion on every handle, the handles of $X'$ descend to topology of the branch set rather than topology of the quotient manifold.  Since $X'$ has no 3-handles, the branch set will be a ribbon surface in $B^4$.  We then get $X$ by doubling $X'$ and there is a (doubled) involution on $X$ which we still call $\varphi$.  So we see that $X$ is the double branched cover of a knotted surface $S$ in $S^4$, and $S$ is the double of a ribbon surface. If $X$ is spin then $S$ is a torus, and $S$ is a Klein bottle if $X$ is not spin. The normal Euler number of the Klein bottle is 0 since it is built as a double. The Theorem implies that $S$ is stably irreducible.  To obtain examples with negative Euler characteristic, observe that $\#_\ell S$ is still a double of a ribbon surface, and $b_2(\pi_1(\Sigma_2(\#_\ell S))) = \ell$.  Again, the Theorem applies.
\end{proof}

Before proving the Theorem, we do a warm-up proposition that applies more broadly and could be relevant for the Kinoshita conjecture.  
\begin{proposition}
Let $S$ be a knotted surface in $S^4$ other than a sphere.  If  $b_2(\pi_1(\Sigma_2(S))) > 0$, then $S$ is not the connected sum of a knotted 2-sphere and an unknotted surface.  
\end{proposition}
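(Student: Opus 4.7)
I will argue the contrapositive: assume $S = K \# U$ with $K$ a knotted $2$-sphere and $U$ an unknotted surface, and show $b_2(\pi_1(\Sigma_2(S))) = 0$. First, the connect-sum decomposition of $S$ is realized by a smoothly embedded $3$-sphere in $S^4$ meeting $S$ in an unknotted circle. Since the branched double cover of $(S^3, \textup{unknot})$ is $S^3$, the preimage of this $3$-sphere in $\Sigma_2(S)$ is an embedded $3$-sphere exhibiting a connected sum $\Sigma_2(S) \cong \Sigma_2(K) \# \Sigma_2(U)$. For unknotted $U$, the branched double cover $\Sigma_2(U)$ is a connected sum of simply-connected standard $4$-manifolds ($S^4$, $\pm \mathbb{C}P^2$, $S^2 \times S^2$), so $\pi_1(\Sigma_2(U)) = 1$. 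Van Kampen gives $\pi_1(\Sigma_2(S)) \cong \pi_1(\Sigma_2(K))$, and since $H_2$ of a free product of groups splits as a direct sum of the factors' $H_2$, it suffices to show $b_2(\pi_1(\Sigma_2(K))) = 0$ for every knotted $2$-sphere $K$.

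The main step is to prove the stronger statement that $\Sigma_2(K)$ is a rational homology $4$-sphere for any $2$-knot $K$. Its Euler characteristic is $\chi(\Sigma_2(K)) = 2\chi(S^4) - \chi(K) = 2$, and Poincar\'e duality gives $b_1(\Sigma_2(K)) = b_3(\Sigma_2(K))$, so it is enough to show $b_1(\Sigma_2(K)) = 0$. Combining the Wang sequence for the infinite cyclic cover $\widetilde{X} \to X := S^4 \setminus K$ (restricted to the $2$-fold subcover) with the Mayer--Vietoris sequence for $\Sigma_2(K) = Y \cup (D^2 \times S^2)$, where $Y$ is the $2$-fold unbranched cover of $X$, one identifies
\[
H_1(\Sigma_2(K); \mathbb{Q}) \;\cong\; V/(t+1)V,
\]
where $V = H_1(\widetilde{X}; \mathbb{Q})$ is the rational Alexander module, a finite-dimensional $\mathbb{Q}$-vector space by Levine's finiteness theorem for $2$-knots. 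This quotient vanishes precisely when the Alexander polynomial $\Delta_K$ satisfies $\Delta_K(-1) \neq 0$.

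Kervaire's observation (the input referenced in the introduction) together with $H_1(S^4 \setminus K) = \mathbb{Z}$ forces $\Delta_K(1) = \pm 1$ for any $2$-knot; a parity argument then closes the loop: writing $\Delta_K(t) = \sum a_i t^i$, the sum $\Delta_K(1) + \Delta_K(-1) = 2 \sum a_{2i}$ is even, so $\Delta_K(-1) \equiv \Delta_K(1) \equiv \pm 1 \pmod 2$, and in particular $\Delta_K(-1)$ is an odd integer and hence nonzero. Therefore $b_1(\Sigma_2(K)) = 0$, whence $b_2(\Sigma_2(K)) = 0$ from $\chi(\Sigma_2(K)) = 2$. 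Finally, since $H_2(\pi_1(M); \mathbb{Q})$ is a quotient of $H_2(M; \mathbb{Q})$ for any path-connected space $M$ (Hopf's exact sequence from the classifying map), we conclude $b_2(\pi_1(\Sigma_2(K))) = 0$. The step that I expect to require the most care is the clean identification of $H_1(\Sigma_2(K); \mathbb{Q})$ with $V/(t+1)V$ via the combined Wang/Mayer--Vietoris calculation, in particular tracking the meridian class in $H_1(Y; \mathbb{Q})$ that is killed when the branched tubular neighborhood is attached.
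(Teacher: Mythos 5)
Your argument is correct and follows essentially the same route as the paper: split $\Sigma_2(S)$ as $\Sigma_2(K)\#\Sigma_2(U)$ with $\Sigma_2(U)$ simply connected, use that the branched double cover of a 2-knot is a rational homology 4-sphere, and conclude via the Hopf sequence (equivalently, attaching cells of index $\ge 3$) that $b_2(\pi_1) \le b_2$. The only difference is that you supply a proof of the rational-homology-sphere fact via $\Delta_K(-1)$ being odd, whereas the paper simply recalls it as standard.
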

\begin{proof}
Suppose for a contradiction that $S = N \# U$ where $N$ is a knotted 2-sphere and $U$ is an unknotted surface.  Then $\Sigma_2(S)$ is a connected sum of $\Sigma_2(N)$ and $\Sigma_2(U)$, where the latter is a connected sum of $CP^2$'s, $\overline{CP^2}$'s, and $S^2 \times S^2$'s.  This implies that $\pi_1(\Sigma_2(N)) = \pi_1(\Sigma_2(S))$.  Recall that the branched double cover of a knotted 2-sphere in $S^4$ is always a rational homology 4-sphere, in particular $0 = b_2(\Sigma_2(N))$.

Since an Eilenberg-MacLane space for $\pi_1(\Sigma_2(N))$ can be built by attaching cells to $\Sigma_2(N)$ of index~$\ge 3$ we have that $ b_2(\Sigma_2(N))  \geq b_2(\pi_1(\Sigma_2(N)))$. We now have the following contradiction.
\[
0 = b_2(\Sigma_2(N))  \geq b_2(\pi_1(\Sigma_2(N))) = b_2(\pi_1(\Sigma_2(S)) > 0. \qedhere
\]
\end{proof}
The proof of the Theorem is similar to the proof of the Proposition,  but we need some analysis of the image of $\pi_2(\Sigma_2(N))$ in $H_2(\Sigma_2(N))$.

\begin{proof}[Proof of the Theorem]
Suppose that $S$ is a knotted surface of Euler characteristic $2-2k$ obtained from doubling a ribbon surface in $B^4$.  Then $b_2(\Sigma_2(S)) = 2k$. 
Suppose for a contradiction that there exists a knotted surface $S'$ with $\chi(S') > \chi(S)$ and unknotted surfaces $U, U'$ so that $S \# U = S' \# U'$.  By further stabilizing both $U$ and $U'$ by an unknotted $RP^2$ if needed, we can assume that both $U$ and $U'$ are sums of unknotted $RP^2s$. Let $\ell = b_2(\Sigma_2(U))$, $\ell_{\pm}=b^{\pm}(\Sigma_2(U))$, so $\ell=\ell_++\ell_-$. Define $\ell', \ell_+'$, and $\ell_-'$ similarly for $U'$.  By definition, $\ell' > \ell$.  
We will study the intersection form on $H_2(\Sigma_2(S \# U))$ restricted to the image of $\pi_2(\Sigma_2(S \# U))$ (and similarly for $S'\#U'$), and derive a contradiction by getting different restricted forms for the two different decompositions.  
 
Note that $\Sigma_2(U)$ is a connected sum of $CP^2$'s and $\overline{CP^2}$'s, so $\pi_1(\Sigma_2(S \# U)) = \pi_1(\Sigma_2(S))$.  Hence, $b_2(\pi_1(\Sigma_2(S \# U))) = k$ by hypothesis and $b_2(\Sigma_2(S \# U)) = 2k + \ell$. 
The Hopf exact sequence for path-connected spaces \cite{Hopf} applied to $S \# U$ gives:
\[
\pi_2(\Sigma_2(S \# U)) \to H_2(\Sigma_2(S \# U)) \to H_2(\pi_1(\Sigma_2(S \# U))) \to 0.
\] The sequence implies the image of $\pi_2(\Sigma_2(S \# U))$ in $H_2(\Sigma_2(S \# U))$ has rank $k + \ell$.  We can geometrically realize a full rank subgroup of $im(\pi_2(\Sigma_2(S \# U)))$ as follows.  Since $S$ is  obtained from doubling a ribbon surface, there is a rank $k$ subgroup coming from  $H_2(\Sigma_2(S))$ represented by pairwise disjoint, square-zero embedded spheres.  Orthogonal to this rank $k$ subgroup, there is a rank $\ell$ subgroup in $im(\pi_2(\Sigma_2(S \# U)))$ coming from $H_2(\Sigma_2(U))$ which contributes an $\ell_+[+1]\oplus \ell_-[-1]$ summand to the restricted intersection form. Thus the nondegenerate part of the restricted intersection form has rank $\ell$.

However, from the perspective of $S' \# U'$, there is a rank $\ell'$ subgroup of $im(\pi_2(\Sigma_2(S' \# U')))$ coming from spherical classes in $\Sigma_2(U')$ which contributes an $\ell'_+[+1]\oplus \ell'_-[-1]$ summand to the intersection form.  This means that the nondegenerate part of the restricted intersection form has rank at least $\ell'>\ell$, which is a contradiction.      
\end{proof}

\begin{remark}\label{rmk:rp2-split}
We can also show that the Klein bottles built above in the proof of the Corollary do not split as a connected sum of two (possibly knotted) projective planes.  
If one had such a splitting, then $\Sigma_2(S) = P_1 \# P_2$, where $P_1$ and $P_2$ are each integer homology $CP^2$'s.  Since $\sigma(\Sigma_2(S)) = 0$, without loss of generality, $Q_{P_1} = (+ 1)$ and $Q_{P_2} = (-1)$.  However, $T(2,3,7)$ does not split as a free-product of groups\footnote{One way to see $T(2,3,7)$ does not split as a free-product of groups is as follows.  Since $T(2,3,7)$ is a hyperbolic reflection group, the complement of a vertex in its  Cayley graph is connected.  However, the complement of a vertex in the Cayley graph of a free product of non-trivial groups is disconnected.}
, and so either $\pi_1(P_1) = T(2,3,7)$ or $\pi_1(P_2) = 0$ or vice versa.  We consider the former; the argument for the latter is similar.  In this case, since $P_2$ is simply-connected, $H_2(P_2)$ is in the image of $\pi_2$; in particular, we can find a square $-1$ class in the image of $\pi_2$.  Since $\Sigma_2(S)$ is a double, it has an orientation-reversing homeomorphism.  That means that we can also find a $+1$ class in $H_2(\Sigma_2(S))$ in the image of $\pi_2$.  This implies all of $H_2(X)$ is in the image of $\pi_2$, which contradicts the Hopf exact sequence.  We note that many of Yoshikawa's irreducible Klein bottles in \cite{Yoshikawa} satisfy the same type of indecomposability.
\end{remark}


We conclude by giving an alternate construction of surfaces satisfying the hypotheses of the Theorem; in particular their branched double cover will have fundamental group $T(2,3,7)$. This construction comes from a double of a ribbon surface more directly.  

Let $D$ denote the usual ribbon disk for $P(-2,3,7) \# -P(-2,3,7)$.  Attach a band $B$ to turn the $P(-2,3,7)$ summand to any pretzel knot $P(-2,3,7,n)$ guided by the arc $\alpha$ in the top left frame of  Figure~\ref{fig:branched}. Double the surface $D \cup B$ to get a knotted torus or Klein bottle $S$. The orientability of $S$ is determined by the parity of $n$.   
Because $S$ is a double it necessarily has normal Euler number 0.  We now analyze the branched double cover and its fundamental group.  

Notice that $\Sigma_2(D)$ is $(Y - B^3) \times I$, where $Y$ is the Seifert fibered space $S^2(0;1/2,-1/3,-1/7)$. We sketch a proof of this in Figure \ref{fig:branched}. In the branched cover $B$ lifts to a 2-handle attachment. Let $C$ denote the lift of the arc $\alpha$.  Hence $\pi_1(\Sigma_2(S))$ can be described as the quotient of $\pi_1(Y)$ by the normal closure of $C$.  From Figure~\ref{fig:branched} one can see that $C$ is a regular fiber of $Y$. 
Killing a regular fiber in the fundamental group of a Seifert fibred space gives the orbifold fundamental group of the base orbifold (see e.g. \cite[Lemma 3.2]{Scott}),  which in this case is $T(2,3,7)$.  Consequently, the fundamental group of $\Sigma_2(S)$ is $T(2,3,7)$. Since $S$ is a double of a ribbon surface, the Theorem applies.  

\begin{figure}
\begin{overpic}[scale=.6, trim = 0in 1.8in 0in 1.5in, clip]{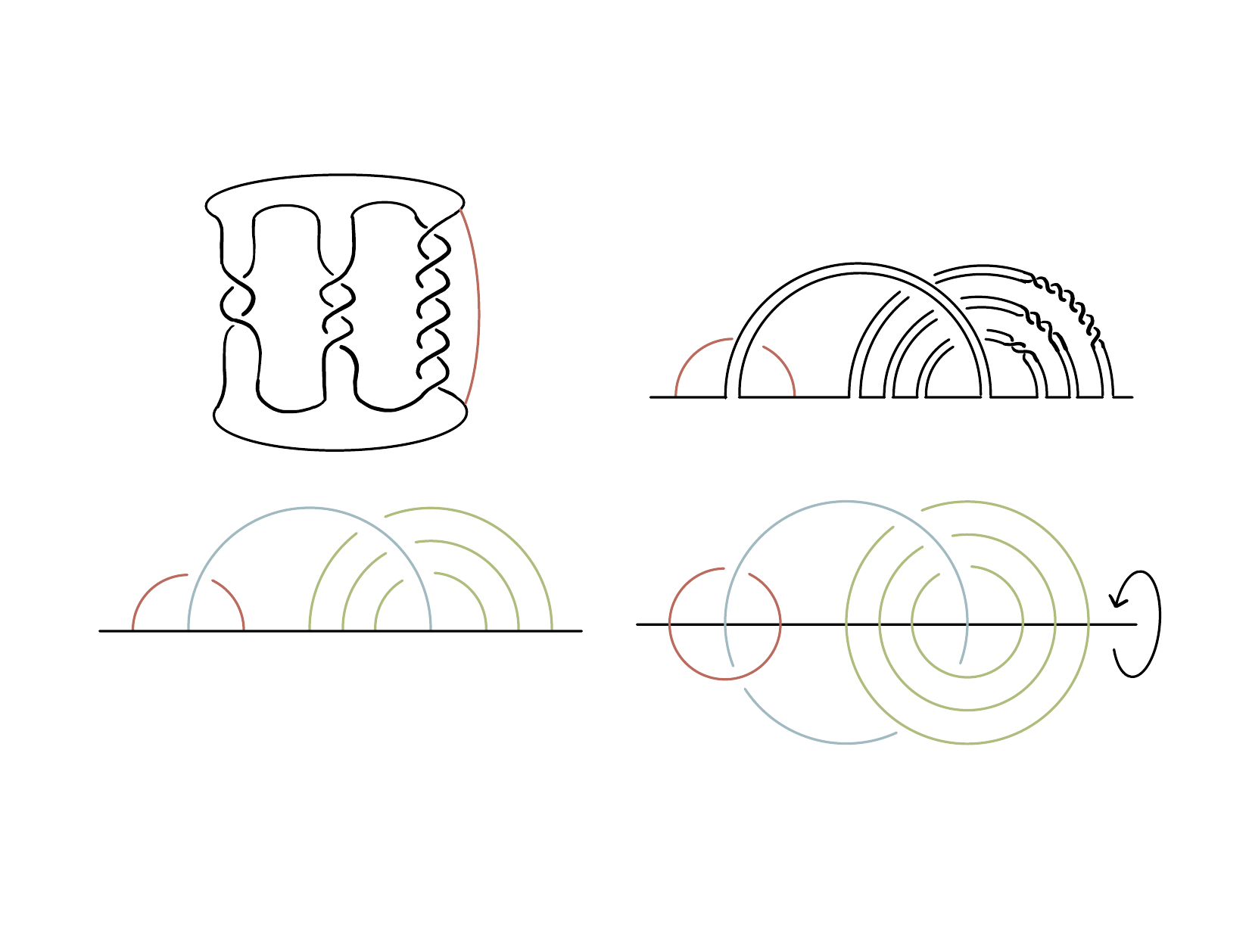}
 \put(39, 34){\color{lred}$\alpha$}
  \put(53, 14){\color{lred}$C$}
 \put(60, 18){\color{lblue}$0$}
  \put(86, 15){\color{lgreen}$7$}
    \put(85, 10.5){\color{lgreen}$3$}
       \put(78, 10.5){\color{lgreen}$-2$}
  \end{overpic}
\caption{Top left, we see the arc $\alpha$ for which band surgery goes from $P(-2,3,7)$ to $P(-2,3,7,n)$, where $n$ is the number of half-twists in the band.  Top right, we isotope $P(-2,3,7)\cup\alpha$ into a convenient position. Observe then that $P(-2,3,7)$ is obtained from the horizontal unknot by tangle replacements along the blue and green arcs marked in the bottom left. By the Montesinos trick, $\Sigma_2(P(-2,3,7))$ is obtained the double cover of the horizontal unknot by surgery along the blue and green link in the bottom right. Surgery on this link yields $S^2(0;1/2,-1/3,-1/7)$, and we see that $\alpha$ lifts to a regular fiber. For more details on passing between branch sets and surgery descriptions, we find \cite[Figure 19]{Bloom} helpful. 
}\label{fig:branched}
\end{figure}

\section*{Acknowledgements} TL was supported in part by NSF Grant DMS-2105469.  TL is grateful to SLMath where this project started on the side during the Summer Research in Mathematics program in Summer 2023, and to the UT Austin Department of Mathematics for their hospitality.  LP\ was supported in part by a Sloan Fellowship, a Clay Fellowship, and the Simons collaboration ``New structures in low-dimensional topology.'' We thank Anthony Conway, Jeff Danciger, Cameron Gordon, and Maggie Miller for helpful discussions.  

\bibliography{references}
\bibliographystyle{alpha}

\end{document}